\documentclass[11pt]{amsart}

\usepackage[USenglish]{babel}
\usepackage[T1]{fontenc} 
\usepackage[utf8,latin1]{inputenc}

\usepackage{amsmath,amsthm,amssymb,amsfonts}
\usepackage{dsfont}
\usepackage{mathtools}

\usepackage{enumitem}
\usepackage{nicematrix}
\usepackage{lmodern}

\usepackage[bookmarks=true]{hyperref}
\usepackage{xcolor}
\hypersetup{
    colorlinks,
    linkcolor={red!50!black},
    citecolor={blue!50!black},
    urlcolor={blue!80!black},
}

\usepackage{todonotes}\setlength{\marginparwidth}{2cm}
\usepackage[bottom=3cm, top=3cm, left=3.5cm, right=3.5cm]{geometry}

\mathtoolsset{showonlyrefs}  

\newcommand{\R}{\mathbb{R}}
\newcommand{\hei}{\mathbb{H}}
\newcommand{\mean}{\mathcal{H}}
\newcommand{\D}{\mathcal{D}}
\newcommand{\hess}{\mathrm{Hess}_{\mathrm{H}}}
\newcommand{\sr}{sub-Riemannian }
\newcommand{\vol}{\mu}
\newcommand{\diverg}{\mathrm{div}_\vol}
\newcommand{\area}{d\sigma_\mathrm{H}}
\newcommand{\riemarea}{d\sigma_\mathrm{R}}

\newcommand{\surf}{\Sigma}
\newcommand{\Nperp}{T}

\theoremstyle{plain}
\newtheorem{thm}{Theorem}[section]

\newtheorem{lem}[thm]{Lemma}
\newtheorem*{lem*}{Lemma}

\theoremstyle{definition}
\newtheorem{defn}[thm]{Definition}

\theoremstyle{remark}
\newtheorem{rmk}[thm]{Remark}

\author[Tommaso Rossi]{Tommaso Rossi}
\address{Univ. Grenoble Alpes, CNRS, Institut Fourier, F-38000 Grenoble, France \& SISSA, Via Bonomea, 265, 34136 Trieste, Italy}
\email{\href{mailto:tommaso.rossi1@univ-grenoble-alpes.fr}{tommaso.rossi1@univ-grenoble-alpes.fr}}

\title[Integrability of the sub-Riemannian mean curvature at degenerate points]{Integrability of the sub-Riemannian mean curvature at degenerate characteristic points in the Heisenberg group}

\date{\today}

\begin{document}
 

\begin{abstract}
We address the problem of integrability of the sub-Riemannian mean curvature of an embedded hypersurface around isolated characteristic points. The main contribution of this note is the introduction of a concept of mildly degenerate characteristic point for a smooth surface of the Heisenberg group, in a neighborhood of which the sub-Riemannian mean curvature is integrable (with respect to the perimeter measure induced by the Euclidean structure). As a consequence we partially answer to a question posed by Danielli-Garofalo-Nhieu in \cite{DGN-Integrability}, proving that the mean curvature of a real-analytic surface with discrete characteristic set is locally integrable.
\end{abstract}

\maketitle

\section{Introduction and statements}
Let $M$ be a sub-Riemannian manifold, and $\surf\subset M$ be an embedded hypersurface. The horizontal mean curvature $\mathcal{H} : \surf \to \R$ is a geometrical invariant which arises naturally in different areas of geometric analysis. It appears in the theory of minimal surfaces \cite{MR2354992,Hladky-Pauls,MR3319952,Pauls,Malchiodi}, in the study of the heat content asymptotics in \sr manifolds \cite{TW-heat-cont-hei,rizzi2020heat}, and in Steiner-type formulas for the volume of tubes around hypersurfaces \cite{Steiner}.

Of particular relevance in all aforementioned applications is the (local) integrability of $\mathcal{H}$, either with respect to the horizontal perimeter measure or the Riemannian one (cf.\ Section \ref{sec:prel} for precise definitions). An important fact is that, even for smooth hypersurfaces $\surf$, the horizontal mean curvature blows-up at the so-called characteristic points, where the subspace of horizontal directions is tangent to $\surf$, making the local integrability problem a non-trivial one.

For what concerns the sub-Riemaniannian perimeter measure $\sigma_\mathrm{H}$, as remarked first in \cite{DGN-Integrability} for the Heisenberg group, the blow-up of $\mathcal{H}$ is compensated by the degeneration of $\sigma_\mathrm{H}$, and thus $\mathcal{H} \in L^1_{\mathrm{loc}}(\surf,\sigma_\mathrm{H})$.

The aforementioned compensation fails if one replaces the sub-Riemannian perimeter measure $\sigma_\mathrm{H}$ on $\surf$ with the Riemannian one $\sigma_\mathrm{R}$, and in general $\mathcal{H}\notin L^1_{\mathrm{loc}}(\surf,\sigma_\mathrm{R})$. In all known examples, however, either $\surf$ is not very smooth, or the set of characteristic points has positive dimension, cf.\ \cite{DGN-Integrability}. On the other hand, if $\surf$ is at least $C^2$ and the characteristic set is discrete, no counter-examples to local integrability are known. Furthermore, a thorough analysis of many specific cases led the authors in \cite{DGN-Integrability} to conjecture that, under these assumptions, $\mean\in L^1_\mathrm{loc}(\surf,\sigma_\mathrm{R})$.

\medskip

In this note we address the question of local integrability with respect to the Riemannian perimeter measure in the Heisenberg group and in the more general context of three-dimensional contact manifolds. The case of non-degenerate and isolated characteristic points is elementary, and we can state the following result (proved in a setting that includes the Heisenberg group).

\begin{thm}
\label{thm:intro1}
Let $M$ be a three-dimensional contact \sr manifold, equip\-ped with a smooth measure $\vol$ and let $\surf\subset M$ be a $C^2$ embedded surface. Assume that all characteristic points of $\surf$ are isolated and non-degenerate. Then,
\begin{equation}
\mean\in L^1_{\mathrm{loc}}\left(\surf,\sigma_\mathrm{R}\right),
\end{equation}
where $\sigma_\mathrm{R}$ denotes the Riemannian induced measure by $\vol$ on $\surf$. 
\end{thm}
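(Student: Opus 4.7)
The plan is to reduce the claim to a local assertion near each isolated characteristic point $p_0\in\surf$ and to establish the pointwise bound $|\mean|\le K/r$ in a punctured neighborhood of $p_0$, where $r$ denotes Euclidean distance to $p_0$ in a well-chosen chart. Since $\mean$ is smooth away from the characteristic set, which is finite on any compact piece of $\surf$, the two-dimensional computation $\int_{r\le\rho}r^{-1}\,dx\,dy = 2\pi\rho$ will then suffice to conclude $\sigma_\mathrm{R}$-integrability near $p_0$.

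To implement this, I would fix $p_0$ and choose Darboux coordinates $(x,y,z)$ centered there, in which the contact distribution is $\ker(dz-\tfrac{1}{2}(x\,dy - y\,dx))$ and $X_1 := \partial_x - \tfrac{y}{2}\partial_z$, $X_2 := \partial_y + \tfrac{x}{2}\partial_z$ span the horizontal directions, orthonormal up to a smooth positive conformal factor determined by the \sr metric and the measure $\vol$. Because $p_0$ is characteristic, the Reeb direction $\partial_z$ is transverse to $T_{p_0}\surf$, so locally $\surf = \{z = f(x,y)\}$ for some $f\in C^2$ with $f(0,0) = \partial_x f(0,0) = \partial_y f(0,0) = 0$. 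With defining function $u := z - f$ one has $\nabla_\mathrm{H} u = -\bar f_x X_1 - \bar f_y X_2$, where $\bar f_x := \partial_x f + \tfrac{y}{2}$ and $\bar f_y := \partial_y f - \tfrac{x}{2}$; the characteristic set of $\surf$ thus coincides with the zero set of the planar map $\Phi := (\bar f_x, \bar f_y)$.

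The non-degeneracy of $p_0$ should precisely translate into the invertibility of $d\Phi(0,0)$, whence the inverse function theorem yields $|\nabla_\mathrm{H} u|=|\Phi|\sim r$ in a punctured neighborhood of $0$. A direct computation then writes $\mean$ as a fraction with denominator $|\nabla_\mathrm{H} u|^3$ and numerator polynomial in the first and second derivatives of $u$ along $X_1, X_2$; using $\bar f_x,\bar f_y = O(r)$ together with boundedness of the second derivatives of $f$, the numerator is $O(r^2)$ and therefore $|\mean|\le K/r$ near $p_0$. Since $\sigma_\mathrm{R}$ has a smooth strictly positive density with respect to Lebesgue measure in the graph chart, local integrability follows at once from the elementary two-dimensional integral above.

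The only point that really needs verification is the translation of the intrinsic non-degeneracy assumption, phrased on an arbitrary three-dimensional contact manifold with a smooth measure $\vol$, into the invertibility of $d\Phi(0,0)$ and into a lower bound $|\nabla_\mathrm{H} u|\ge c r$ that survives multiplication by the conformal factor relating $(X_1,X_2)$ to an actual $\vol$-orthonormal horizontal frame. This is bookkeeping that affects only the constant $K$, never the $1/r$ rate, which is why I expect the case of isolated non-degenerate characteristic points to fall into the elementary part of the problem, in contrast to the degenerate situation that motivates the rest of the paper.
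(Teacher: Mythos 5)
Your proposal is correct and follows essentially the same route as the paper: reduce to a graph chart near the isolated characteristic point, observe that non-degeneracy of $p_0$ is exactly invertibility of the differential of the planar map $(X_1u,X_2u)$ at the origin (its Jacobian there equals $\det(\hess(u)|_{p_0})$), deduce $W\sim r$, and conclude from the two-dimensional integrability of $1/r$ together with the bound $|\mean|\le C/W$. The one quibble is that on a general contact manifold the metric on the distribution need not be \emph{conformal} to the Darboux-frame one, but since only a two-sided norm equivalence on a compact neighborhood is needed this is harmless (the paper instead invokes a normal form providing an actual orthonormal frame, and performs the explicit change of variables $(\tilde x,\tilde y)=(X_1u,X_2u)$ rather than the comparison $W\sim r$).
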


The case of \emph{degenerate} characteristic points is less understood, even for the case of smooth surfaces in the Heisenberg group $\hei$ (to which we restrict to for the rest of this introduction). The main result of this note is the definition of a concept of \emph{mildly degenerate} characteristic point for surfaces in the Heisenberg group, for which we are able to prove local integrability. In the following, $\hei$ is equipped with the Lebesgue measure, which induces the Riemannian measure $\sigma_\mathrm{R}$ on a smooth embedded hypersurface $\surf$.

\begin{thm}\label{thm:intro2}
Let $\surf\subset\hei$ be a smooth embedded surface. Assume that the all characteristic points of $\surf$ are isolated and mildly degenerate. Then
\begin{equation}
\mean\in L^1_{\mathrm{loc}}\left(\surf,\sigma_\mathrm{R}\right),
\end{equation}
where $\sigma_\mathrm{R}$ denotes the Riemannian induced measure on $\surf$. 
\end{thm}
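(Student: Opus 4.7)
Since $\mean$ is smooth away from the discrete characteristic set, by a partition of unity it suffices to prove integrability of $|\mean|\,\riemarea$ in a neighborhood of each isolated mildly degenerate characteristic point $p_0$. Fix one; after a Heisenberg left-translation we may assume $p_0 = 0$, and since the horizontal plane is tangent to $\surf$ at a characteristic point, the vertical axis is transverse to $\surf$ at the origin. Hence $\surf$ is locally a graph $\{z = f(x,y)\}$, and after a vertical translation we may also take $f(0)=0$ and $\nabla f(0)=0$. Setting $F := f_x + y/2$ and $G := f_y - x/2$, the characteristic set becomes $\{F=G=0\}$, reducing to $\{0\}$ by assumption. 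A direct computation, using that $F,G$ depend only on $(x,y)$ (so $X,Y$ act on them as $\partial_x,\partial_y$), yields
\begin{equation}
\mean(x,y) = \frac{F^2 f_{yy} - 2 F G\, f_{xy} + G^2 f_{xx}}{(F^2+G^2)^{3/2}},
\end{equation}
while $\riemarea = \sqrt{1+|\nabla f|^2}\,dx\,dy$ is bounded near $0$. The numerator is a bounded quadratic form in $(F,G)$, which gives the pointwise bound
\begin{equation}
|\mean| \le \frac{C}{\sqrt{F^2+G^2}}
\end{equation}
on a neighborhood of $0$, reducing the theorem to Lebesgue integrability of $(F^2+G^2)^{-1/2}$ on a punctured disc around the origin.

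I would then pass to polar coordinates $(x,y) = (r\cos\theta, r\sin\theta)$ and exploit the structural identity $F_y - G_x = 1$ inherited from $[X,Y] = \partial_z$, which prevents $F$ and $G$ from vanishing simultaneously to high order at $0$. In the non-degenerate case the Jacobian $d(F,G)(0)$ has rank two, so $\sqrt{F^2+G^2} \gtrsim r$ and integrability is immediate (recovering Theorem~\ref{thm:intro1} in the Heisenberg setting). At a mildly degenerate point the Jacobian drops to rank one: the linear parts of $F$ and $G$ are proportional to a single non-zero linear form $a(x,y)$, vanishing along a line $\theta = \theta_*$. The role of the \emph{mildly degenerate} hypothesis will be to ensure that the quadratic parts of $(F,G)$ provide the missing transverse direction along $\{a=0\}$, yielding a uniform lower bound of the form
\begin{equation}
F^2 + G^2 \gtrsim r^2 \sin^2(\theta - \theta_*) + r^4
\end{equation}
on a neighborhood of the origin.

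Granted this bound, the remaining integral $\int r\,dr\,d\theta/\sqrt{r^2\sin^2(\theta-\theta_*) + r^4}$ is finite (as one checks by setting $\sigma := r\sin(\theta-\theta_*)$ and passing to polar coordinates in the $(r,\sigma)$-plane), and summing over the locally finite characteristic set concludes the proof. The main obstacle is precisely the previous step: extracting the quantitative lower bound on $F^2+G^2$ from the geometric content of \emph{mild degeneracy}. Without such a hypothesis, the zero set of $(F,G)$ could accumulate along curves tangent to $\{a=0\}$ with arbitrarily high contact, making $(F^2+G^2)^{-1/2}$ non-integrable; the definition of mild degeneracy should be crafted exactly to exclude this pathology, and the main new input of the note should therefore be the verification that this quantitative estimate really holds under that geometric condition.
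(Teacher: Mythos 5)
Your reduction of the problem to the local integrability of $W^{-1}=(F^2+G^2)^{-1/2}$ is exactly the paper's first step (the pointwise bound $|\mean|\le C/W$ is estimate \eqref{eqn:mean_est}), and your graph formula for $\mean$ is correct. The genuine gap is the step you yourself flag as ``the main obstacle'': you never derive a lower bound for $F^2+G^2$ from the hypothesis, and the bound you conjecture, $F^2+G^2\gtrsim r^2\sin^2(\theta-\theta_*)+r^4$, is not what mild degeneracy gives. In the paper (Definitions \ref{defn:crit_curve} and \ref{d:mildly}) a degenerate point is \emph{mildly degenerate} when the function $s\mapsto Tu(\gamma(s))$ along the critical curve $\mathcal{C}=\{q\in\surf \mid N(q)\in T_q\surf\}$ has a zero of \emph{finite but arbitrary} order $k\ge 2$; it is not the requirement that the quadratic parts of $(F,G)$ restore transversality along the kernel line, which would force $k=2$. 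After the normal form \eqref{eqn:norm_form} and the rectification $t=y+h_x$ of $\mathcal{C}$, one has $W^2=t^2+\bigl(\alpha t+\xi(x)+tR(x,t)\bigr)^2$ with $\xi(x)=Tu(\gamma(x))=c_0x^k(1+r(x))$, so along $\mathcal{C}$ one only has $W\sim |x|^k$; your proposed bound would give $W\gtrsim r^2$ there, which is false whenever $k\ge 3$. Correspondingly, your final polar-coordinate computation only treats the case $k=2$.

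The correct quantitative statement is $W^2\ge c\,(t^2+x^{2k})$ near the origin (the cross term $2\alpha t\xi$ is absorbed using $1-|\alpha|(1+\alpha^2)^{-1/2}>0$), and the paper integrates $W^{-1}$ by passing to weighted polar coordinates $c_0x^k=\rho\cos\theta$, $(\alpha^2+1)^{1/2}t=\rho\sin\theta$, whose Jacobian contributes the integrable factor $\rho^{1/k}|\cos\theta|^{1/k-1}$. One can also check directly that $\int (t^2+x^{2k})^{-1/2}\,dx\,dt<\infty$ for every finite $k$ (a logarithmic divergence in $t$ for each fixed $x$), so your overall strategy is salvageable; but as written the key estimate is both misstated and unproved, and the proof is incomplete precisely where the new content of the theorem lies.
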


The concept of \emph{mild degeneration} is based on a finite-order condition along an intrinsic curve $\mathcal{C} \subset \surf$ emanating from degenerate characteristic points, which to our best knowledge does not appear in previous literature (cf.\ Definitions \ref{defn:crit_curve} and \ref{d:mildly}). In particular, if $\surf$ is \emph{real-analytic}, all degenerate characteristic points are mildly degenerate. As a consequence, we have the following corollary, which answers affirmatively to the conjecture in \cite{DGN-Integrability}, at least for real-analytic surfaces.

\begin{thm}
\label{thm:intro3}
Let $\surf\subset\hei$ be a real-analytic embedded surface. Assume that the all characteristic points of $\surf$ are isolated. Then,
\begin{equation}
\mean\in L^1_{\mathrm{loc}}\left(\surf,\sigma_\mathrm{R}\right),
\end{equation}
where $\sigma_\mathrm{R}$ denotes the Riemannian induced measure on $\surf$. 
\end{thm}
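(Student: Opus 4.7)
The strategy is to reduce Theorem \ref{thm:intro3} to Theorems \ref{thm:intro1} and \ref{thm:intro2}. Since the characteristic set of $\surf$ is discrete by hypothesis, local integrability of $\mean$ need only be established in a neighborhood of each characteristic point $p$. If $p$ is non-degenerate, Theorem \ref{thm:intro1} applies directly, as $\hei$ is a three-dimensional contact sub-Riemannian manifold equipped with the (smooth) Lebesgue measure. Hence the only remaining task is to show that any isolated \emph{degenerate} characteristic point of a real-analytic surface is mildly degenerate in the sense of Definition \ref{d:mildly}, so that Theorem \ref{thm:intro2} becomes applicable.

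To this end, I would proceed in two steps. First, I would argue that for a real-analytic $\surf$ the intrinsic curve $\mathcal{C}\subset\surf$ emanating from $p$ (Definition \ref{defn:crit_curve}) admits a real-analytic description near $p$: since $\mathcal{C}$ is constructed intrinsically from real-analytic data on $\surf$ (essentially as a trajectory of the characteristic foliation, whose defining tangential horizontal vector field is real-analytic with an isolated zero at $p$), a finite sequence of analytic blow-ups desingularizes the flow and produces an analytic parametrization $\gamma:(-\varepsilon,\varepsilon)\to\surf$ of $\mathcal{C}$ with $\gamma(0)=p$. Second, the finite-order condition of Definition \ref{d:mildly} then becomes a statement about the order of vanishing at $0$ of a real-analytic function of one variable, namely the pull-back by $\gamma$ of the real-analytic datum on $\surf$ that measures degeneration. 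Such a function is either identically zero or vanishes to some finite order at $0$; identical vanishing, however, would propagate a one-parameter family of characteristic points along $\mathcal{C}$, contradicting the discreteness of the characteristic set. Thus the required finite-order condition holds, $p$ is mildly degenerate, and Theorem \ref{thm:intro2} concludes the argument.

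The main obstacle I expect is the analytic regularity of $\mathcal{C}$ near $p$: real-analytic vector fields with isolated zeros do not in general admit analytic integral curves through the singular point (formal power series solutions may diverge), so a genuine desingularization by successive analytic blow-ups, or an alternative formulation of the mild degeneration condition that requires only the formal Taylor data of $\mathcal{C}$ at $p$, is likely unavoidable. A second subtlety is verifying that the particular function whose order of vanishing controls mild degeneration genuinely forces a continuum of characteristic points when it vanishes identically, rather than a weaker pathology that would leave $p$ isolated but violate the finite-order condition; this should follow from tracing through Definition \ref{defn:crit_curve} and using that along $\mathcal{C}$ the characteristic equations degenerate precisely when the point is characteristic.
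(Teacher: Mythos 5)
Your overall skeleton is the paper's: reduce to Theorems \ref{thm:intro1} and \ref{thm:intro2} by showing that every isolated degenerate characteristic point of a real-analytic surface is mildly degenerate, and obtain the finite-order condition from the dichotomy ``a one-variable real-analytic function is either identically zero or vanishes to finite order,'' with identical vanishing excluded because it would force every point of $\mathcal{C}$ near $p$ to be characteristic. That second half is exactly the argument in the proof of Theorem \ref{thm:realanal}: there, in the coordinates $(x,t)$ of \eqref{eqn:change_var}, a point of $\mathcal{C}=\{t=0\}$ is characteristic if and only if $\xi(x)=Tu(\gamma(x))=0$, and $\xi\not\equiv 0$ by isolatedness.

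The genuine problem is in your first step, and it stems from a misidentification of the curve $\mathcal{C}$. You describe it as ``a trajectory of the characteristic foliation, whose defining tangential horizontal vector field is real-analytic with an isolated zero at $p$,'' and you then flag the analytic regularity of such a trajectory through the singular point as the main obstacle, proposing a desingularization by blow-ups. But Definition \ref{defn:crit_curve} does not define $\mathcal{C}$ as an integral curve of anything singular: $N$ is the \emph{left-invariant} (constant-coefficient, nowhere-vanishing) extension of a fixed vector $N_p\in\ker\bigl(\hess(u)|_p\bigr)$, and $\mathcal{C}=\{u=0\}\cap\{Nu=0\}$ is a level set. Lemma \ref{lem:smooth_curve} shows that $(u,Nu)$ is a submersion at $p$ (because $TNu(p)\neq 0$ while $NTu(p)=0$), so the real-analytic implicit function theorem immediately yields a regular real-analytic parametrization of $\mathcal{C}$ near $p$; no blow-ups, no formal-versus-convergent issues, no separatrices. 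The paper even includes a remark warning that $\mathcal{C}$ is not related to the characteristic foliation of $\surf$. As written, your argument attacks a harder and different problem (desingularizing a singular analytic line field on $\surf$ at $p$), and you do not actually carry out that desingularization, so the proposal has a gap; the fix, however, is simply to use the correct definition of $\mathcal{C}$, after which your step two closes the proof exactly as in the paper.
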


Our results include and unify several previous examples of local integrability of the horizontal mean curvature present in literature. In particular, one can compare our results with Propositions $3.1$-$3.4$ in \cite{DGN-Integrability}: Propositions $3.1, 3.2$ are included in Theorem \ref{thm:intro1}, while Proposition $3.3$ is covered by Theorem \ref{thm:intro3}.

\begin{rmk}
Theorems \ref{thm:intro1}, \ref{thm:intro2} and \ref{thm:intro3} are proved below in a slightly stronger form, cf.\ Theorems \ref{thm:int_nondeg}, \ref{thm:int_deg} and \ref{thm:realanal}, respectively. Namely, in each case, we prove the local integrability of $\tfrac{1}{W}$, where $W$ denotes the norm of horizontal projection of the Riemannian horizontal normal to $\surf$. For this stronger result, the mild degeneration assumption is sharp and cannot be improved, cf.\ Remark \ref{rmk:sharpness}. In particular, it implies the local integrability of the horizontal mean curvature, but also the local integrability of the intrinsic horizontal Gaussian curvature as defined in \cite{Balogh-Gauss}, yielding Gauss-Bonnet-type theorems for surfaces with isolated and mildly degenerate (or non-degenerate) characteristic points. We refer to \cite[Thm.\ 1.1]{Balogh-Gauss} for details.
\end{rmk}

\textbf{Acknowledgments.} This work was supported by the Grants ANR-15-CE40-0018, ANR-18-CE40-0012 of the ANR, and the Project VINCI 2019 ref.\ c2-1212. The author is grateful to Luca Rizzi for his numerous comments and suggestions that allowed to greatly improve the quality of the note.

\section{Preliminaries}
\label{sec:prel}
Let $M$ be a smooth, connected $m$-dimensional manifold. For our purposes, a \sr structure on $M$ is defined by a subbundle of the tangent bundle $\D\subset TM$, which we call \emph{distribution}, and a metric on it, namely a positive, symmetric $(0,2)$-tensor on $\D$, denoted by $g$. 

If the distribution has rank $k$, then, locally in an open set $U$, we may describe it via a local orthonormal frame, namely a family of $k$ vector fields such that
\begin{equation} 
\label{eqn:loc_frame}
\D_p=\mathrm{span}_p\{X_1,\ldots,X_k\}\subset T_pM, \qquad\forall\,p \in U.
\end{equation}
We assume that the distribution is \emph{bracket-generating}, cf.\ \cite{nostrolibro} for details.


\subsection*{Divergence and horizontal gradient} Let $\vol$ be a smooth measure on $M$, defined by a positive tensor density. The \emph{divergence} of a smooth vector field is defined by
\begin{equation}
\diverg(X)\vol=\mathcal{L}_X \vol, \qquad\forall\,X\in\Gamma(TM),
\end{equation}
where $\mathcal{L}_X$ denotes the Lie derivative in the direction of $X$. The \emph{horizontal gradient} of a function $f\in C^1(M)$, denoted by $\nabla f$, is defined as the horizontal vector field (i.e. tangent to the distribution at each point), such that
\begin{equation}
g(\nabla f,V)= Vf,\qquad\forall\,V\in\Gamma(\D),
\end{equation} 
where $V$ acts as a derivation. In terms of a local orthonormal frame as in \eqref{eqn:loc_frame}, one has
\begin{equation}
\nabla f=\sum_{i=1}^k(X_if) X_i,\qquad\forall\,f\in C^1(M).
\end{equation}

\subsection*{Characteristic points} Let $\surf\subset M$ be a $C^1$ embedded hypersurface. We say that $p\in\surf$ is a \emph{characteristic point} if
\begin{equation}
\D_p\subseteq T_p\surf.
\end{equation}
We denote by $C(\surf)$ the set of characteristic points. Notice that $C(\surf)\subset\surf$ is a closed set, and it has zero measure if $\surf$ is at least $C^2$. We refer to \cite{Balogh-size, MR343011} for fine results about the size of $C(\surf)$ under suitable assumptions on the regularity of $\surf$.

The hypersurface $\surf$ can be locally described as follows: at $p\in M$, there exists a neighborhood $U\subset M$ of $p$ and $u\in C^1(U)$ such that
\begin{equation}
\surf\cap U=\{u=0\}, \qquad du|_{\surf\cap U}\neq 0.
\end{equation}
When $\surf$ is locally given as the zero-locus of $u$, then $p\in C(\surf)$ if and only if
\begin{equation}
\label{eqn:char_cond}
X_iu(p)=0,\qquad\forall\,i=1,\ldots,k.
\end{equation}

\subsection*{Horizontal Hessian} We introduce the horizontal Hessian for classifying characteristic points (cf.\ also \cite{barilari2020stochastic}). Fix an affine connection $\tilde{\nabla}$ on the distribution $\D$. Then, the horizontal Hessian of $u\in C^2(M)$ is the $(0,2)$-tensor on $\D$, defined as
\begin{equation} 
\hess(u)(V,W)=g(\tilde{\nabla}_V(\nabla u),W), \qquad\forall\,V,W\in\Gamma(\D).
\end{equation} 

While in general the definition of horizontal Hessian depends on the choice of the connection, it is intrinsic at characteristic points.

\begin{lem}
\label{lem:invariance}
Let $M$ be a \sr manifold and let $\surf=\{u=0\}\subset M$, where $u\colon M\rightarrow\R$ is a $C^2$ submersion on $\surf$. If $p\in C(\surf)$, then $\hess(u)|_p$ does not depend on the choice of the connection and thus is a well-defined bilinear map on $\D_p$.
\end{lem}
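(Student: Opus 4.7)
The strategy rests on two elementary facts: at a characteristic point $p$, the horizontal gradient $\nabla u$ vanishes, and the difference of two affine connections on a vector bundle is tensorial. First I would record that, by the characteristic condition \eqref{eqn:char_cond},
\begin{equation}
(\nabla u)_p = \sum_{i=1}^k (X_i u)(p)\, X_i|_p = 0.
\end{equation}

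Next, given two affine connections $\tilde{\nabla}, \tilde{\nabla}'$ on the subbundle $\mathcal{D}$, I would consider their difference
\begin{equation}
A(V,Z) := \tilde{\nabla}_V Z - \tilde{\nabla}'_V Z, \qquad V \in \Gamma(TM),\ Z \in \Gamma(\mathcal{D}),
\end{equation}
and verify that $A$ is $C^\infty(M)$-bilinear: the Leibniz terms $(Vf)Z$ appearing in $\tilde{\nabla}_V(fZ)$ and $\tilde{\nabla}'_V(fZ)$ cancel in the difference, while $C^\infty(M)$-linearity in the direction $V$ is built into both connections. Hence $A$ is a genuine $(1,2)$-tensor, and $A_p(V_p, Z_p)$ depends only on the point-values $V_p, Z_p$. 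Applying this with $Z = \nabla u$ and evaluating at the characteristic point yields
\begin{equation}
\hess(u)(V,W)|_p - \hess'(u)(V,W)|_p = g\bigl(A_p(V_p, (\nabla u)_p),\, W_p\bigr) = 0,
\end{equation}
by linearity of $A$ in its second slot together with the vanishing $(\nabla u)_p = 0$. This proves the invariance under the choice of connection.

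Finally, I would verify that $(V,W) \mapsto \hess(u)(V,W)|_p$ descends to a genuine bilinear map on $\mathcal{D}_p$, i.e.\ that it depends only on the values $V_p, W_p$. Linearity in $W$ is immediate from the $C^\infty(M)$-linearity of $g$ in its arguments; linearity in $V$ follows from the defining property $\tilde{\nabla}_{fV}(\nabla u) = f\,\tilde{\nabla}_V(\nabla u)$ of an affine connection, so $\tilde{\nabla}_V(\nabla u)|_p$ depends only on $V_p$. I do not expect a serious obstacle at any step: the only point that deserves careful bookkeeping is the $C^\infty$-bilinearity of the difference tensor $A$, which is precisely the mechanism that isolates characteristic points as the locus of intrinsic meaning for $\hess(u)$.
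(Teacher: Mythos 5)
Your proof is correct, but it takes a genuinely different route from the paper. You argue abstractly: the difference $A = \tilde{\nabla} - \tilde{\nabla}'$ of two connections is $C^\infty(M)$-bilinear, hence a tensor, so $\tilde{\nabla}_V\nabla u|_p - \tilde{\nabla}'_V\nabla u|_p = A_p(V_p,(\nabla u)_p)$ vanishes because $(\nabla u)_p = 0$ at a characteristic point. The paper instead computes in a local orthonormal frame: it expands $\nabla u = \sum_\ell (X_\ell u)X_\ell$, applies the Leibniz rule, and observes that every connection-dependent term carries a factor $X_\ell u(p) = 0$. Both arguments hinge on the same vanishing, but the paper's computation has a concrete payoff your version does not deliver: it produces the explicit formula $\hess(u)(V,W)|_p = \sum_{i,j} V^i W^j X_iX_ju(p)$, identifying the Hessian at a characteristic point with the matrix of second horizontal derivatives $(X_iX_ju(p))_{ij}$. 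That identification is used repeatedly later (in the definition of non-degeneracy, in the Jacobian computation of Theorem \ref{thm:int_nondeg}, and in the Heisenberg normal form), so if you adopt your cleaner tensorial argument you should still record the coordinate expression as a corollary. Your final paragraph verifying that the Hessian is tensorial in $V$ and $W$ is harmless but essentially redundant: that holds for any choice of connection, at any point, directly from the definition.
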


\begin{proof}
Let $\{X_1,\ldots,X_k\}$ be a local orthonormal frame around $p$, then, by definition of horizontal Hessian, for $V,W\in \Gamma(\D)$, we have
\begin{align}
\hess(u)(V,W) &=\sum_{i,j=1}^k V^i W^j g\left(\tilde{\nabla}_{X_i}\nabla u,X_j\right) \\
& =\sum_{i,j=1}^k V^i W^j g\left(\tilde{\nabla}_{X_i}\left(\sum_{l=1}^k (X_\ell u)X_\ell\right),X_j\right)\\
							&=\sum_{i,j,\ell=1}^k V^i W^j g\left((X_iX_\ell u)X_\ell+(X_\ell u)\tilde{\nabla}_{X_i}X_\ell,X_j\right),
\end{align}
using the linearity and the Leibniz formula for the connection. Finally, since $p\in C(\surf)$, $X_\ell u(p)=0$ for any $\ell=1,\ldots,k$, therefore we conclude that
\begin{equation}
\label{eqn:Hessian}
\hess(u)(V,W)|_{p}=\sum_{i,j=1}^k V^i W^j X_iX_ju(p),
\end{equation}
and the right-hand side does not depend on the choice of the connection $\tilde\nabla$.
\end{proof}


\begin{defn}
Let $M$ be a \sr manifold and $\surf\subset M$ be a $C^2$ embedded hypersurface in $M$. We say that $p\in C(\surf)$ is a \emph{non-degenerate characteristic point} if 
\begin{equation}
\det\left(\hess(u)|_{p}\right)\neq 0,
\end{equation}
where $u\in C^2$ is a local defining function for $\surf$ in a neighborhood of $p$. Notice that this property does not depend on the choice of $u$.
\end{defn}

\subsection*{Horizontal mean curvature} 
The \emph{horizontal mean curvature} at $p\in\surf$ is defined as
\begin{equation}
\label{eqn:mean_defn}
\mean(p)=-\left.\diverg\left(\frac{\nabla u}{\|\nabla u\|}\right)\right\rvert_p,
\end{equation}
where $u\in C^2$ is a local defining function for $\surf$ in a neighborhood of $p$. Notice that the value of $\mean$ does not depend on the choice of $u$.

\subsection*{(Sub-)Riemannian induced measure} Let $\nu$ be the horizontal unit normal to $\surf$, then the \sr induced measure $\sigma_\mathrm{H}$ on $\surf$ is the positive smooth measure with density $|i_\nu\mu|$. If $u$ is a local defining function for $\surf$, $\nu$ is given by
\begin{equation}
\label{eqn:normal}
\nu=\frac{\nabla u}{\|\nabla u\|}.
\end{equation}
Analogously, to define the Riemannian induced measure $\sigma_\mathrm{R}$, consider any Riemannian extension of the \sr structure, then replace $\nu$ with the Riemannian unit normal, which is given by \eqref{eqn:normal} with the Riemannian gradient. Notice that $\sigma_\mathrm{R}$ coincides with the $n-1$ dimensional Hausdorff measure on $\surf$ induced by the Riemannian structure. Moreover, it depends on the choice of a Riemannian extension, but this choice does not play any role concerning the integrability of the horizontal mean curvature.

\subsection{A general estimate for horizontal mean curvature}
We provide here a general estimate for the horizontal mean curvature. Let $M$ be a \sr manifold and $\surf\subset M$ be a $C^2$ embedded hypersurface. Without loss of generality, assume that $u\in C^2(M)$ is a global defining function for $\surf$, that is $u\colon M\rightarrow \R$ is a submersion on $\surf$ and $\surf=\{u=0\}$. Having fixed a local orthonormal frame for the \sr structure at a point $p$, say $\{X_1,\ldots,X_k\}$, recall that 
\begin{equation}
\label{eqn:hor_grad}
\nabla u=\sum_{i=1}^k (X_iu) X_i,
\end{equation}
and its norm, which we denote by $W$, is given by
\begin{equation}
\label{eqn:norm_grad}
W^2=\|\nabla u\|^2=g(\nabla u,\nabla u)=\sum_{i=1}^k (X_iu)^2.
\end{equation}
Therefore, we can write the horizontal mean curvature explicitly in terms of $\nabla u$ and $W$
\begin{equation}
\mean=-\diverg\left(\frac{\nabla u}{W}\right)=-\frac{1}{W}\Delta u+\frac{1}{W^2}g(\nabla u,W).
\end{equation}
Using formula \eqref{eqn:hor_grad} and \eqref{eqn:norm_grad}, we obtain
\begin{equation}
\label{eqn:mean_curv}
\mean=-\frac{1}{W}\Delta u+\frac{1}{W^3}\sum_{i,j=1}^k (X_iu) (X_ju)(X_iX_ju),
\end{equation}
which gives the estimate
\begin{equation}
\label{eqn:mean_est}
\begin{split}
|\mean|&\leq\frac{1}{W}\left(\left\|\Delta u\right\|_{L^\infty(U)}+\sum_{i,j=1}^k \left\|X_iX_ju\right\|_{L^\infty(U)}\right)\leq\frac{C_0}{W},
\end{split}
\end{equation}
for a suitable constant $C_0>0$, where we have used the inequality: $|X_i u|\leq W$, for any $i=1,\ldots,k$. Here $\|\cdot\|_{L^\infty(U)}$ denotes the supremum norm and $U$ is a relatively compact neighborhood of $p$. 

We recover the well-known integrability result for the horizontal mean curvature with respect to the \sr perimeter measure (see also \cite[Prop.\ 3.5]{DGN-Integrability}).

\begin{lem}
Let $M$ be a \sr manifold, equipped with a smooth measure $\vol$ and let $\surf\subset M$ be a $C^2$ embedded hypersurface in $M$. Then
\begin{equation}
\mean\in L^1_{\mathrm{loc}}\left(\surf,\sigma_\mathrm{H}\right).
\end{equation}
Here $\sigma_\mathrm{H}$ denotes the \sr induced measure on $\surf$.
\end{lem}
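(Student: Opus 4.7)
The plan is to combine the pointwise estimate $|\mean| \leq C_0/W$ established in \eqref{eqn:mean_est} with the observation that the \sr area $\sigma_\mathrm{H}$ vanishes to first order in $W$ at characteristic points, so that the blow-up of $\mean$ is exactly compensated by the degeneration of the induced measure.

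More precisely, the first step is to relate $\sigma_\mathrm{H}$ and $\sigma_\mathrm{R}$. I would fix an auxiliary Riemannian extension $g^\mathrm{R}$ of $g$, obtained by completing a horizontal orthonormal frame $\{X_1, \ldots, X_k\}$ to an orthonormal frame $\{X_1, \ldots, X_k, Y_1, \ldots, Y_{n-k}\}$ of $TM$ near a given point. The Riemannian gradient then decomposes as $\nabla^\mathrm{R} u = \nabla u + \sum_j (Y_j u) Y_j$, which yields $g^\mathrm{R}(\nabla u, \nabla^\mathrm{R} u) = \|\nabla u\|^2 = W^2$. Since both $\sigma_\mathrm{H}$ and $\sigma_\mathrm{R}$ are defined by contracting the same density $\vol$ with a unit vector transverse to $\surf$, writing $\nu = a\,\nu_\mathrm{R} + \tau$ with $\tau \in T\surf$ and observing that $i_\tau \vol$ vanishes on $T\surf$ gives the pointwise identity
\[
d\sigma_\mathrm{H} \;=\; \bigl|g^\mathrm{R}(\nu,\nu_\mathrm{R})\bigr|\, d\sigma_\mathrm{R} \;=\; \frac{W}{\|\nabla^\mathrm{R} u\|}\, d\sigma_\mathrm{R},
\]
where $\nu = \nabla u/W$ and $\nu_\mathrm{R} = \nabla^\mathrm{R} u/\|\nabla^\mathrm{R} u\|$.

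The conclusion is then immediate. On any relatively compact neighborhood $U$ of a point of $\surf$, the quantity $\|\nabla^\mathrm{R} u\|$ is bounded below by a positive constant $c$, because $u$ is a submersion on $\surf$. Combining with \eqref{eqn:mean_est} gives
\[
\int_{\surf \cap U} |\mean|\, d\sigma_\mathrm{H} \;\leq\; \int_{\surf \cap U} \frac{C_0}{W} \cdot \frac{W}{\|\nabla^\mathrm{R} u\|}\, d\sigma_\mathrm{R} \;\leq\; \frac{C_0}{c}\, \sigma_\mathrm{R}(\surf \cap U) \;<\; \infty,
\]
and since every point of $\surf$ admits such a neighborhood, local integrability follows. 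There is no real obstacle in this argument: the entire content is the algebraic cancellation trading the $W^{-1}$ blow-up of $\mean$ against the factor $W$ in the density of $\sigma_\mathrm{H}$ with respect to any smooth non-degenerate surface measure. The only mild subtlety is that, although $\sigma_\mathrm{R}$ and the intermediate Riemannian data depend on the choice of extension $g^\mathrm{R}$, the final conclusion — a bound for $\int|\mean|\,d\sigma_\mathrm{H}$ — does not, as one would expect.
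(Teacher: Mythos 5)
Your proposal is correct and follows essentially the same route as the paper: the paper simply asserts that $\sigma_\mathrm{H}\propto W\sigma_\mathrm{R}$ up to a smooth never-vanishing factor and then cancels this $W$ against the bound $|\mean|\le C_0/W$ from \eqref{eqn:mean_est}, exactly as you do. Your derivation of the density $W/\|\nabla^\mathrm{R}u\|$ via the decomposition $\nu=a\,\nu_\mathrm{R}+\tau$ just makes that asserted proportionality explicit.
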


\begin{proof}
Let $U$ be a neighborhood of $p$ and consider $u\in C^2$, a local defining function for $\surf$ on $U$. Then, denoting by $W=\|\nabla u\|$, we have $\sigma_\mathrm{H} \propto W\sigma_\mathrm{R}$, up to a smooth never-vanishing function. Therefore, using \eqref{eqn:mean_est}, we obtain
\begin{equation}
\left|\int_{\surf\cap U}\mean \area\right|\leq C\int_{\surf\cap U}|\mean|W\riemarea\leq C\int_{\surf\cap U}\frac{C_0}{W}W\riemarea<+\infty .\qedhere
\end{equation}
\end{proof}

\section{Integrability for non-degenerate characteristic points in 3D contact sub-Riemannian manifolds}
Let $M$ be a smooth manifold of dimension $3$. Let $\omega$ be a contact one-form, that is such that $\omega\wedge d\omega\neq 0$. Then, the contact distribution is 
\begin{equation}
\D_p=\ker(\omega_p)\subset T_pM, \qquad\forall\,p\in M.
\end{equation}
By the non-degeneracy assumption on $d\omega$, $\D$ is a subbundle of rank 2 and is bracket-generating. Any metric on $\D$ defines a \sr structure on $M$. We will refer to $M$ as \emph{contact \sr manifold}. Recall the following normal form for an orthonormal frame (see \cite{MR1648710,MR1722037}).

\begin{thm}
\label{thm:normal_form}
Let $M$ be a 3D contact sub-Riemannian manifold, with contact 1-form $\omega$, and $\{X_1,X_2\}$ be a local orthonormal frame for $\D=\ker(\omega)$. There exists a smooth coordinate system $(x,y,z)$ such that
\begin{equation}
\begin{split}
X_1&=\partial_x-\frac{y}{2}\partial_z+\beta y(y\partial_x-x\partial_y)+\gamma y\partial_z,\\
X_2&=\partial_y+\frac{x}{2}\partial_z-\beta x(y\partial_x-x\partial_y)+\gamma x\partial_z,
\end{split}
\end{equation}
where $\beta=\beta(x,y,z)$ and $\gamma=\gamma(x,y,z)$ are smooth functions satisfying
\begin{equation}
\beta(0,0,z)=\gamma(0,0,z)=\partial_x\gamma(0,0,z)=\partial_y\gamma(0,0,z)=0.
\end{equation}
\end{thm}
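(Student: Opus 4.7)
The plan is to construct coordinates $(x, y, z)$ in which the $z$-axis is the Reeb orbit through a chosen base point $p_0$, the frame coincides with its Heisenberg model along this axis, and all residual freedom is encoded in two functions $\beta, \gamma$ satisfying the claimed vanishing conditions. First, I would introduce the Reeb vector field $X_0$ associated to $\omega$, namely the unique vector field with $\omega(X_0) = 1$ and $\iota_{X_0} d\omega = 0$, and use its flow to define the coordinate $z$, so that the orbit of $X_0$ through $p_0$ becomes the $z$-axis $\{x = y = 0\}$. A transverse surface through $p_0$ provides initial $(x, y)$-coordinates, chosen so that $X_1(p_0) = \partial_x|_{p_0}$ and $X_2(p_0) = \partial_y|_{p_0}$. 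The $z$-dependent rotational freedom in the $(x, y)$-plane, a contact-preserving symmetry of the local structure, is then used to arrange $X_1(0, 0, z) = \partial_x|_{(0,0,z)}$ and $X_2(0, 0, z) = \partial_y|_{(0,0,z)}$ along the entire $z$-axis.

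Next, I would Taylor-expand the frame in $(x, y)$ at the $z$-axis. The contact condition $\omega(X_i) = 0$, combined with orthonormality $g(X_i, X_j) = \delta_{ij}$ and the fact that $d\omega$ restricted to the $z$-axis is the standard symplectic form in our normalization, pins down the linear-in-$(x, y)$ terms up to residual coordinate changes: the $\partial_z$-components must reproduce $-\frac{y}{2}\partial_z$ and $+\frac{x}{2}\partial_z$, while the horizontal first-order components can be eliminated by coordinate changes of $(x, y)$ depending smoothly on $z$. This yields $X_i = \tilde X_i + O\bigl((x,y)^2\bigr)$, where $\tilde X_1 = \partial_x - \frac{y}{2}\partial_z$ and $\tilde X_2 = \partial_y + \frac{x}{2}\partial_z$ are the Heisenberg model vectors.

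The residual quadratic-and-higher deviation is then parametrized by two functions: an $\mathrm{SO}(2)$-rotation angle of leading quadratic order in $(x, y)$, whose infinitesimal action on $\{\tilde X_1, \tilde X_2\}$ along the $z$-axis is given by the rotation vector field $y\partial_x - x\partial_y$, producing the $\beta$-contribution with antisymmetric $y$- and $-x$-prefactors in $X_1, X_2$; and a $\partial_z$-deformation compatible with the contact constraint, giving the $\gamma$-contribution with symmetric $y$- and $x$-prefactors. The conditions $\beta(0,0,z) = \gamma(0,0,z) = \partial_x\gamma(0,0,z) = \partial_y\gamma(0,0,z) = 0$ then follow from expanding $g(X_1, X_1) = g(X_2, X_2) = 1$ and $g(X_1, X_2) = 0$ to second order in $(x, y)$ at the $z$-axis, together with the first-order normalizations performed earlier.

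The main obstacle is the careful order-by-order bookkeeping of coordinate reductions: each new change at a given order in $(x, y)$ must preserve the normalizations achieved at lower orders, and one must verify that after all reductions the residual freedom is exactly two-dimensional and parametrized by $\beta, \gamma$ with the stated vanishing. A related subtlety is that the final coordinates are not Darboux coordinates for $\omega$ itself (the expression of $\omega$ acquires $\beta$- and $\gamma$-dependent corrections), but only coordinates adapted to the distribution $\D = \ker\omega$; this is precisely what allows $\beta$ and $\gamma$ to appear as independent smooth functions rather than constrained by a single residual degree of freedom.
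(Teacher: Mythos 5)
The paper does not prove this statement: Theorem \ref{thm:normal_form} is recalled from the literature (the references \cite{MR1648710,MR1722037}), so there is no internal proof to compare against; your proposal has to stand on its own, and as written it has genuine gaps.

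The central one is that the heart of the theorem --- the collapse of the entire deviation of $\{X_1,X_2\}$ from the Heisenberg frame into exactly two scalar functions multiplying the specific fields $y(y\partial_x-x\partial_y)$, $-x(y\partial_x-x\partial_y)$, $y\partial_z$, $x\partial_z$ --- is asserted rather than derived. A priori the frame is six component functions; coordinate changes give three functions of freedom and a frame rotation one more, so a dimension count is consistent with two residual functions, but nothing in your outline shows that the horizontal part of the frame becomes $I+\beta\,(y,-x)^{T}(y,-x)$ (a single function times that particular rank-one symmetric matrix) or that the two $\partial_z$-corrections share the same $\gamma$. Saying ``the residual freedom is parametrized by $\beta,\gamma$'' at that point is restating the conclusion. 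Moreover, your proposed mechanism for the vanishing conditions is circular: $g$ is \emph{defined} by declaring $\{X_1,X_2\}$ orthonormal, so $g(X_i,X_j)=\delta_{ij}$ holds identically in every coordinate system and expanding it yields no constraints whatsoever on $\beta$ and $\gamma$. The conditions $\beta=\gamma=\partial_x\gamma=\partial_y\gamma=0$ on the axis must be \emph{arranged} by the construction of the coordinates (and of the rotated frame), not deduced from orthonormality.

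A second, structural problem is that the normal form is exact on a full neighborhood, with $\beta,\gamma$ smooth functions of $(x,y,z)$, whereas your strategy --- Taylor-expand in $(x,y)$ at the axis and kill terms order by order --- can only produce a formal normal form along the axis; summing the corrections does not give convergent coordinate changes in the smooth category. The proofs in the cited sources avoid this by a non-perturbative geometric construction (coordinates adapted to the horizontal geodesics emanating from the axis, which forces the radial identity $xX_1+yX_2\equiv x\partial_x+y\partial_y \pmod{\partial_z}$ exactly and hence the rank-one structure of the horizontal part), which is the ingredient your sketch is missing. The preliminary steps (flowing a transversal field to define $z$, normalizing the frame along the axis by a $z$-dependent rotation) are fine, but they are the easy part.
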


We now prove the first integrability result for isolated non-degenerate characteristic points on general contact manifolds.

\begin{thm}
\label{thm:int_nondeg}
Let $\surf\subset M$ be a $C^2$ embedded surface, let $p$ be an isolated non-degenerate characteristic point and let $u\in C^2$ be a local defining function for $\surf$ in a neighborhood of $p$. Denote with $W$ the norm of the horizontal gradient of $u$. Then
\begin{equation}
\label{eqn:norm_nondeg}
\frac{1}{W}\in L^1_{\mathrm{loc}}(\surf,\sigma_\mathrm{R}),
\end{equation}
where $\sigma_\mathrm{R}$ denotes the Riemannian induced measure on $\surf$. In particular
\begin{equation}
\mean\in L^1_{\mathrm{loc}}(\surf,\sigma_\mathrm{R}). 
\end{equation}
\end{thm}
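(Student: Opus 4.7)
The plan is to reduce the local integrability of $1/W$ to the standard planar integral $\int r^{-1}\,dx\,dy$, using non-degeneracy to produce a linear lower bound $W\gtrsim \sqrt{x^2+y^2}$ in a suitable chart. First I place coordinates $(x,y,z)$ centered at $p$ as provided by Theorem \ref{thm:normal_form}, so that $X_1|_0=\partial_x$, $X_2|_0=\partial_y$, and the coefficients $\beta,\gamma,\partial_x\gamma,\partial_y\gamma$ all vanish on the $z$-axis. Since $\dim\D_p=\dim T_p\surf=2$ and $p\in C(\surf)$, the tangent plane to $\surf$ at $p$ coincides with the horizontal plane $\{z=0\}$; hence $\surf$ is locally a graph $z=f(x,y)$ with $f(0,0)=0$ and $f_x(0,0)=f_y(0,0)=0$. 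As a defining function I take $u(x,y,z)=z-f(x,y)$.

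Next I study the pullbacks to the $(x,y)$-plane,
\begin{equation}
P(x,y):=(X_1u)(x,y,f(x,y)), \qquad Q(x,y):=(X_2u)(x,y,f(x,y)),
\end{equation}
so that $W^2(x,y,f(x,y))=P(x,y)^2+Q(x,y)^2$. Thanks to the vanishing of $\beta,\gamma,\partial_x\gamma,\partial_y\gamma$ on the $z$-axis, the corresponding correction terms in $X_1u|_\surf$ and $X_2u|_\surf$ are $o(r)$ as $r:=\sqrt{x^2+y^2}\to 0$, leaving the leading Heisenberg-type behavior
\begin{equation}
P(x,y)=-f_x(x,y)-\tfrac{y}{2}+o(r), \qquad Q(x,y)=-f_y(x,y)+\tfrac{x}{2}+o(r).
\end{equation}
A direct Taylor expansion at the origin then shows that the Jacobian of $(P,Q)$ at $(0,0)$ coincides, up to transposition, with the matrix of $\hess(u)|_p$ in the basis $\{X_1,X_2\}$ supplied by \eqref{eqn:Hessian}. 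By the non-degeneracy hypothesis this Jacobian is invertible, which gives
\begin{equation}
W^2=P^2+Q^2\geq c(x^2+y^2)
\end{equation}
on $\surf$ in a sufficiently small neighborhood of $p$, for some $c>0$.

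To conclude, since $\surf$ is a $C^2$ graph with horizontal tangent at $p$, the Riemannian induced measure $\sigma_\mathrm{R}$ admits a smooth non-vanishing density with respect to Lebesgue measure $dx\,dy$ in this chart. The lower bound above and a polar-coordinates computation then yield
\begin{equation}
\int_{\surf\cap U}\frac{1}{W}\,\riemarea \leq C\int_{B_\varepsilon(0)}\frac{dx\,dy}{\sqrt{x^2+y^2}}<+\infty,
\end{equation}
and the local integrability of $\mean$ follows from the pointwise estimate \eqref{eqn:mean_est}.

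The main technical point I anticipate is the careful bookkeeping of the $\beta$- and $\gamma$-correction terms in the expressions for $P$ and $Q$, verifying that they are indeed $o(r)$ once one restricts to $\surf$; this is where the precise vanishing properties on the $z$-axis in the normal form become essential. Once the bound $W\gtrsim r$ is in place, the remaining step is the routine polar integral and does not use the full strength of non-degeneracy, only the invertibility of the Jacobian that it provides at the linear level.
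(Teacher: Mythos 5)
Your proposal is correct and follows essentially the same route as the paper: both hinge on the observation that the Jacobian at the origin of the surface-restricted map $(x,y)\mapsto(X_1u,X_2u)$ is exactly the horizontal Hessian $\hess(u)|_p$, which is invertible by non-degeneracy. The only (inessential) difference is in the last step: the paper uses this map as a change of variables so that $W=\sqrt{\tilde x^2+\tilde y^2}$ exactly, whereas you extract the pointwise lower bound $W\geq c\,r$ and finish with the standard polar integral; the two conclusions are interchangeable.
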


\begin{proof}
Introducing the normal form given by Theorem \ref{thm:normal_form}, we may assume that the characteristic point is at the origin and that $\surf$ is locally a graph around the origin. Indeed, recall that $\surf\cap U=\{u=0\}$ and $du\neq 0$ on $\surf\cap U$. However, since $\textbf{0}\in C(\surf)$
\begin{equation}
\label{eqn:1ord_cond}
0=d_pu(X_1)=\partial_xu(\textbf{0})\qquad\text{and}\qquad 0=d_pu(X_2)=\partial_yu(\textbf{0}),
\end{equation}
therefore $\partial_zu(\textbf{0})\neq 0$, which implies that, up to restricting $U$, $\surf\cap U=\{z=g(x,y)\}$, for some $C^2$ function $g\colon\R^2\rightarrow\R$. Moreover, in this coordinates, relations \eqref{eqn:1ord_cond} give first-order conditions on $g$
\begin{equation}
g(0,0)=\partial_xg(0,0)=\partial_yg(0,0)=0.
\end{equation}
Recall that we want to discuss the finiteness of the following integral
\begin{equation}
\label{eqn:int_norm}
\int_{\surf\cap U}\frac{1}{W}\riemarea=\int_V\frac{1}{W(x,y,g(x,y))}f(x,y)dxdy,
\end{equation}
where $V$ is a neighborhood of $(0,0)$ and $f$ is the Riemannian density in coordinates. Since the characteristic point is non-degenerate, up to restricting $V$, the map
\begin{equation}
\label{eqn:coord_change}
\varphi\colon
\begin{pmatrix}
x\\y
\end{pmatrix}
\mapsto
\begin{pmatrix}
\tilde x\\\tilde y
\end{pmatrix}
\quad\text{such that}\quad
\begin{cases}
\tilde x=X_1u(x,y,g(x,y))\\
\tilde y=X_2u(x,y,g(x,y))
\end{cases}
\end{equation} 
defines a smooth change coordinate on $V$. Indeed, its Jacobian at $(0,0)$ equals the determinant of the horizontal Hessian at the origin
\begin{equation}
\left.\det\left(\mathcal{J}\varphi\right)|_{(0,0)}=\det\begin{pmatrix}
\partial_xX_1u+g_x\partial_zX_1u & \partial_yX_1u+g_y\partial_zX_1u\\
\partial_xX_2u+g_x\partial_zX_2u & \partial_yX_2u+g_y\partial_zX_2u
\end{pmatrix}\right\rvert_{(0,0)}=
\det\left(\hess(u)|_{\textbf{0}}\right),
\end{equation}
and by the non-degeneracy assumption is non-zero. Thus, after the change of variables, the integral \eqref{eqn:int_norm} becomes
\begin{equation}
\int_V\frac{1}{W}f(x,y)dxdy=\int_{\varphi(V)}\frac{1}{|\det(\mathcal{J}\varphi)|\sqrt{\tilde x^2+\tilde y^2}}\tilde f(\tilde x,\tilde y)d\tilde xd\tilde y<+\infty.
\end{equation}
The claim \eqref{eqn:norm_nondeg} follows. Using estimate \eqref{eqn:mean_est}, we obtain the local integrability of $\mean$.
\end{proof}

\begin{rmk}
A non-degenerate characteristic point need not to be isolated, however, this situation is quite pathological. For example, in the Heisenberg group $\hei$ (cf.\ Section \ref{sec:hei}), the only situation in which this can occur is when we have a sequence of characteristic points $(x_n,y_n)$ accumulating at the origin, and not contained in any absolutely continuous curve. Indeed, consider $\surf=\{z=g(x,y)\}$ in $\hei$ and assume we have an absolutely continuous curve $\gamma\colon (-\varepsilon,\varepsilon)\rightarrow\surf$ of characteristic points with $\gamma(0)=\textbf{0}$, and such that $\dot\gamma(0)$ exists. Then, the origin is a degenerate characteristic point, indeed for each $t\in (-\varepsilon,\varepsilon)$, we have
\begin{equation}
\begin{cases}
g_x(\gamma_1(t),\gamma_2(t))+\frac{\gamma_2(t)}{2}=0,\\
g_y(\gamma_1(t),\gamma_2(t))-\frac{\gamma_1(t)}{2}=0.
\end{cases}
\end{equation} 
Differentiating both equations with respect to $t$, and evaluating at $t=0$, we have 
\begin{equation}
\begin{cases}
\dot\gamma_1(0)g_{xx}(0,0)+\dot\gamma_2(0)g_{xy}(0,0)+\frac{\dot\gamma_2(0)}{2}=0,\\
\dot\gamma_1(0)g_{xy}(0,0)+\dot\gamma_2(0)g_{yy}(0,0)-\frac{\dot\gamma_1(0)}{2}=0,
\end{cases}
\end{equation}
thus, $\left(\begin{smallmatrix}\dot\gamma_1(0)\\ \dot\gamma_2(0)\end{smallmatrix}\right)\in\ker(\hess(u)|_{\textbf{0}})$, implying that $\textbf{0}$ is a degenerate characteristic point.
\end{rmk}

\section{Integrability for mildly degenerate characteristic points in \texorpdfstring{$\hei$}{H}}\label{sec:hei}
The \emph{Heisenberg group} is the 3D contact structure on $\R^3$, defined by the 1-form
\begin{equation}
\omega=dz-\frac{1}{2}(xdy-ydx).
\end{equation}
A global frame for the contact distribution is given by $\{X,Y\}$, where
\begin{equation}
\label{eqn:hei_vf}
X=\partial_x-\frac{y}{2}\partial_z,\qquad Y=\partial_y+\frac{x}{2}\partial_z.
\end{equation}
Setting $\{X,Y\}$ to be an orthonormal frame, the resulting \sr manifold is the well-known first Heisenberg group, $\hei$. We equip it with the Lebesgue measure.

Let us consider in $\hei$ a surface $\surf=\{u=0\}$, where $u\in C^\infty(\R^3)$ with $du\neq 0$ on $\surf$. Assume that $p\in C(\surf)$ is a degenerate characteristic point, meaning that the horizontal Hessian of $u$ has zero determinant at $p$. Notice that, in $\hei$, the horizontal Hessian at $p\in C(\Sigma)$ coincides with the one introduced in \cite{MR2014879}, and, in terms of the orthonormal basis $\{X,Y\}$, takes the form:
\begin{equation}
\hess(u)|_p = \begin{pmatrix}
XXu(p) & XYu(p)\\
YXu(p) & YYu(p)
\end{pmatrix}.
\end{equation}
By the bracket-generating assumption, one of the entries of the Hessian must be non-zero at $p$, thus, it has a 1-dimensional kernel at $p$, spanned by some unitary vector, say $N_p\in\D_p$, which is unique, up to a sign. We extend $N_p$ to a left-invariant vector field $N\in\Gamma(\D)$. Taking an orthogonal vector field to $N$ in $\Gamma(\D)$, we obtain an orthonormal frame $\{N,\Nperp\}$ for the distribution, which, up to changing sign, we assume to be co-oriented with the standard one \eqref{eqn:hei_vf}.

\begin{defn}
\label{defn:crit_curve}
Let $\surf\subset \hei$ be a smooth embedded surface and let $p\in C(\surf)$ be degenerate. The critical curve of $p$ is defined as the set of points in $\surf$ where $N$ is tangent to $\surf$, i.e.
\begin{equation}
\label{eqn:crit_curve}
\mathcal{C}=\{q\in\surf \mid  N(q)\in T_q\surf\}.
\end{equation}
\end{defn}
We prove now that Definition \ref{defn:crit_curve} is well-posed.
\begin{lem}
\label{lem:smooth_curve}
Let $\surf\subset \hei$ be a smooth embedded surface and let $p\in C(\surf)$ be degenerate. Then, in a neighborhood of $p$, the set $\mathcal{C}$ as in \eqref{eqn:crit_curve}, is a smooth curve in $\surf$, trough $p$.
\end{lem}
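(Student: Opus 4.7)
The plan is to exhibit $\mathcal{C}$ locally as the zero set of a single smooth function on $\surf$ with non-vanishing differential at $p$, and then invoke the classical implicit function theorem. First I would translate so that $p$ is the origin. Since $p$ is characteristic, $Xu(p)=Yu(p)=0$ while $du(p)\neq 0$, forcing $\partial_z u(p)\neq 0$; hence by the Euclidean implicit function theorem $\surf$ is locally the graph $\{z=g(x,y)\}$ with $g(0)=g_x(0)=g_y(0)=0$. I would choose $u(x,y,z)=z-g(x,y)$ as defining function and parametrize $\surf$ by the $(x,y)$-plane, so that the tangency condition $N(q)\in T_q\surf$ translates to $F(x,y)=0$, where $F:=(Nu)|_{\surf}$.

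Using \eqref{eqn:hei_vf} and the fact that $N$ is left-invariant, so that $N=aX+bY$ for constants $(a,b)\in S^1$, a direct computation yields
\[
F(x,y)=-a\,g_x(x,y)-b\,g_y(x,y)+\tfrac{bx-ay}{2},
\]
and consequently
\[
\nabla F(0,0)=\bigl(-a\,g_{xx}(0)-b\,g_{xy}(0)+\tfrac{b}{2},\;-a\,g_{xy}(0)-b\,g_{yy}(0)-\tfrac{a}{2}\bigr).
\]
The key algebraic input is that in these coordinates
\[
\hess(u)|_0=\begin{pmatrix} -g_{xx}(0) & -g_{xy}(0)+\tfrac{1}{2}\\ -g_{xy}(0)-\tfrac{1}{2} & -g_{yy}(0)\end{pmatrix},
\]
and by hypothesis $(a,b)$ spans the kernel of this matrix. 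Writing out the two scalar annihilation relations and substituting them into the expression for $\nabla F(0,0)$, the "Hessian" part cancels while the asymmetric $\pm\tfrac{1}{2}$ off-diagonal contributions (encoding the Heisenberg bracket $[X,Y]=\partial_z$) add up, leaving $\nabla F(0,0)=(b,-a)\neq 0$.

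With $\nabla F(0,0)\neq 0$ in hand, the implicit function theorem shows that $\{F=0\}$ is a smooth $1$-dimensional submanifold of a neighborhood of $(0,0)$ in the $(x,y)$-plane, which lifts via the graph parametrization to a smooth embedded curve in $\surf$ through $p$; this is $\mathcal{C}$. The step that I expect to require the most care is the interpretation of "kernel" for the non-symmetric Hessian matrix: with the convention $\hess(u)(N_p,\cdot)|_p=0$ (i.e.\ annihilation on the left), the off-diagonal $\pm\tfrac{1}{2}$ terms reinforce each other and give a nonzero gradient, whereas the opposite convention would make both components of $\nabla F(0,0)$ vanish and require a higher-order analysis.
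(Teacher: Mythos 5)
Your proof is correct and is essentially the paper's argument written out in graph coordinates: the paper shows $N\Nperp u(p)=\hess(u)(N,\Nperp)|_p=0$ while $[N,\Nperp]u(p)\neq 0$, hence $\Nperp Nu(p)\neq 0$ and $\phi=(u,Nu)$ is a submersion at $p$ --- exactly your cancellation of the Hessian part and survival of the bracket contribution, which in your coordinates gives $\nabla F(0,0)=(b,-a)$. Your resolution of the kernel ambiguity for the non-symmetric Hessian (annihilation in the first slot, $\hess(u)(N_p,\cdot)|_p=0$) is indeed the convention the paper implicitly uses, as confirmed by its later formula $N\propto -YXu(p)\,X+XXu(p)\,Y$.
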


\begin{proof}
Consider for $\surf$ a local defining function $u\in C^\infty$. Then, $N(q)\in T_q\surf$ if and only if $d_qu(N)=0$, thus
\begin{equation}
\mathcal{C}=\{u=0\}\cap\{Nu=0\}.
\end{equation}
In the orthonormal frame $\{N,\Nperp\}$, $Nu(p)=\Nperp u(p)=0$ since $p\in C(\surf)$.
However, $d_p u\neq 0$, therefore, by the bracket-generating assumption
\begin{equation}
[N,\Nperp]u(p)=N\Nperp u(p)-\Nperp Nu (p)\neq 0.
\end{equation}
We are going to show that $N\Nperp u(p)=0$. First of all, since the frame $\{N,\Nperp\}$ is co-oriented with $\{X,Y\}$, there exists $R\in \mathrm{SO}(2)$, such that 
\begin{equation}
\label{eqn:change_frame}
\begin{pmatrix}
N\\ \Nperp
\end{pmatrix}=\begin{pmatrix}
a&b\\-b&a
\end{pmatrix}\begin{pmatrix}
X\\Y
\end{pmatrix}
\end{equation}
where we used the shorthand $a=\cos(\theta)$, $b=\sin(\theta)$, for some $\theta\in [0,2\pi)$. Hence,
\begin{equation}
\begin{split}
N\Nperp u(p) &=(aX+bY)(-bX+aY)u(p)\\
						 &=-abXXu(p)+a^2XYu(p)-b^2YXu(p)+abYYu(p).
\end{split}
\end{equation}
Second of all, by definition of $N$, $N_p\in\ker(\hess(u)|_p)$, thus we obtain
\begin{equation}
0=\begin{pmatrix}
-b&a
\end{pmatrix}\begin{pmatrix}
XXu(p)&YXu(p)\\XYu(p)&YYu(p)
\end{pmatrix}\begin{pmatrix}
a\\b
\end{pmatrix}=N\Nperp u(p).
\end{equation}
Finally, $\Nperp Nu(p)\neq 0$ so the differential of the map $\phi=(u,Nu)$ has maximal rank at $p$, implying that $\mathcal{C}$ is a smooth curve in $\surf$, in a neighborhood of $p$. 
\end{proof}

\begin{rmk}
Notice that, in general, the critical curve $\mathcal{C}$ is not necessarily horizontal, and it is not related to the characteristic foliation induced on $\surf$ by the contact structure.
\end{rmk}

\begin{defn}\label{d:mildly}
Let $\surf\subset\hei$ be an embedded smooth surface, let $p\in C(\surf)$ be a degenerate characteristic point and let $u$ be a local defining function for $\Sigma$ around $p$. Let $\gamma:(-\varepsilon,\varepsilon) \to \mathcal{C}$ be a regular parametrization of $\mathcal{C}$, with $\gamma(0)=p$. We say that $p$ is mildly degenerate if the function 
\begin{equation}
s \mapsto Tu(\gamma(s)),
\end{equation}
has a finite order zero at $s=0$. Notice that this definition does not depend on the choice of $u$ and the regular parametrization.
\end{defn}

\begin{thm}\label{thm:int_deg}
Let $\surf\subset \hei$ be an embedded surface, let $p$ be an isolated mildly degenerate characteristic point and let $u\in C^\infty$ be a local defining function for $\surf$ in a neighborhood of $p$. Denote with $W$ the norm of the horizontal gradient of $u$. Then
\begin{equation}
\frac{1}{W}\in L^1_{\mathrm{loc}}(\surf,\sigma_\mathrm{R}),
\end{equation}
where $\sigma_\mathrm{R}$ denotes the Riemannian induced measure on $\surf$. In particular
\begin{equation}
\mean\in L^1_{\mathrm{loc}}(\surf,\sigma_\mathrm{R}). 
\end{equation}
\end{thm}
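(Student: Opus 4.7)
The plan is to reduce the integrability of $1/W$ to a concrete computation, using the critical curve $\mathcal{C}$ to build coordinates on $\Sigma$ adapted to the degeneracy.

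First, I would set up the picture as in Theorem \ref{thm:int_nondeg}: introduce the normal form of Theorem \ref{thm:normal_form} so that $p=\mathbf{0}$ and $\Sigma = \{z=g(x,y)\}$ locally, with $g(0,0)=\partial_x g(0,0)=\partial_y g(0,0)=0$. Then work with the rotated orthonormal frame $\{N,T\}$ used in Definition \ref{defn:crit_curve}, so that $W^2=(Nu)^2+(Tu)^2$. Define the smooth functions on $\Sigma$
\begin{equation}
G_1(x,y):=Nu(x,y,g(x,y)),\qquad G_2(x,y):=Tu(x,y,g(x,y)),
\end{equation}
so that $W^2\rvert_{\Sigma} = G_1^2+G_2^2$ and both vanish at the origin.

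Second, I would promote Lemma \ref{lem:smooth_curve} into useful coordinates on $\Sigma$. That lemma shows $d(u,Nu)$ has maximal rank at $p$ on $\hei$; since $T_p\Sigma=\ker du|_p$, it follows that $dG_1|_p\neq 0$ as a one-form on $\Sigma$. Hence $\mathcal{C}\cap\Sigma=\{G_1=0\}$ is, near $p$, a smooth curve and by the implicit function theorem we can pick smooth coordinates $(s,t)$ on $\Sigma$ centered at $p$ with
\begin{equation}
G_1(s,t)=t,\qquad \mathcal{C}=\{t=0\},\qquad s\mapsto\gamma(s)=(s,0) \text{ a regular parametrization.}
\end{equation}
In these coordinates mild degeneration translates directly into a statement on $G_2$: there exists an integer $k\geq 1$ and a smooth $a$ with $a(0)\neq 0$ such that
\begin{equation}
G_2(s,0)=s^{k}a(s).
\end{equation}
Writing $G_2(s,t)=G_2(s,0)+t\,R(s,t)$ with $R$ smooth, and choosing the neighborhood small enough that $|R|$ is bounded, the elementary bound $(G_2(s,0)+tR)^2\geq \tfrac12 G_2(s,0)^2 - t^2 R^2$ yields
\begin{equation}
W^2 = t^2 + G_2(s,t)^2 \geq c\bigl(t^2 + s^{2k}\bigr)
\end{equation}
for some $c>0$ on a neighborhood of the origin.

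Third, the Riemannian area on $\Sigma$ expressed in the $(s,t)$ coordinates is $\tilde f(s,t)\,ds\,dt$ for some smooth, bounded density $\tilde f$ (smoothness of the straightening change of variables and of the graph map). Therefore the local integrability of $1/W$ reduces to proving
\begin{equation}
\int_{[-\delta,\delta]^2} \frac{ds\,dt}{\sqrt{t^{2}+s^{2k}}}<+\infty,
\end{equation}
which is immediate by iterated integration: the inner integral in $t$ equals $\log(\delta+\sqrt{\delta^{2}+s^{2k}})-k\log|s|$, and $\log|s|$ is integrable at $0$. Combined with estimate \eqref{eqn:mean_est} this gives $\mean\in L^1_{\mathrm{loc}}(\Sigma,\sigma_\mathrm{R})$.

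The main obstacle I expect is Step 2, specifically verifying that mild degeneration—stated as a finite-order vanishing of $Tu\circ\gamma$ along \emph{any} regular parametrization $\gamma$—really gives the clean pointwise bound $W^2\gtrsim t^2+s^{2k}$ through the straightening coordinates, without the cross term $tG_2(s,0)R$ spoiling the estimate. Once this coercivity is secured, the rest is an explicit integral. A secondary point to check is that the coordinate change straightening $\mathcal{C}$ is genuinely smooth up to $p$ (which follows from $dG_1|_p\neq 0$ on $\Sigma$, as discussed), so that no further degeneracy is introduced by the substitution.
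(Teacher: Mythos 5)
Your overall strategy coincides with the paper's: straighten the critical curve $\mathcal{C}=\{Nu=0\}\cap\surf$ into $\{t=0\}$, expand $Tu$ in the transverse variable, and use the finite-order vanishing of $Tu$ along $\mathcal{C}$ (the mild degeneration hypothesis) to reduce to an explicit two-dimensional integral. Your endgame is more elementary (a pointwise coercivity bound plus an iterated integral, instead of the paper's weighted polar coordinates), and the target bound $W^2\geq c\,(t^2+s^{2k})$ is in fact true. But your derivation of it has a genuine gap, exactly at the point you flag as the expected obstacle, and it is not cosmetic.

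Writing $G_2(s,t)=G_2(s,0)+tR(s,t)$ and applying $(a+b)^2\geq\tfrac12 a^2-b^2$ yields only $W^2\geq\bigl(1-\sup R^2\bigr)t^2+\tfrac12\xi(s)^2$ with $\xi(s)=G_2(s,0)$. The quantity $R(0,0)=\partial_tG_2(0,0)$ is a fixed second-order invariant of $\surf$ at $p$ --- in the normal form \eqref{eqn:norm_form} it equals, up to sign, the coefficient $\alpha$, which is arbitrary --- and shrinking the neighborhood does not make it small. Whenever $|\partial_tG_2(0,0)|\geq 1$ the coefficient of $t^2$ is nonpositive and your lower bound is vacuous; a concrete instance is $g(x,y)=\tfrac12 xy+y^2+x^3$, for which $G_2=2t+6s^2$, so $R\equiv 2$ and your inequality gives $W^2\geq -3t^2+18s^4$. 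The repair is to isolate the linear part rather than lump it into $R$: write $G_2=\lambda t+\xi(s)+t\rho(s,t)$ with $\lambda=\partial_tG_2(0,0)$ and $\rho(0,0)=0$. Then
\begin{equation}
t^2+(\lambda t+\xi)^2=(1+\lambda^2)t^2+2\lambda t\xi+\xi^2
\end{equation}
is a quadratic form in $(t,\xi)$ with determinant $1$, hence positive definite for every $\lambda$, so it dominates $c_\lambda(t^2+\xi^2)$; the perturbation $t\rho$ is then absorbed on a small enough neighborhood. This positive-definiteness is precisely what the paper's bound $1+\alpha\sin(2\theta)/(1+\alpha^2)^{1/2}>0$ encodes in its polar coordinates. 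With this correction, the remaining steps of your argument (boundedness of the density, $\xi(s)=s^k a(s)$ with $a(0)\neq 0$ from mild degeneration, and the integrability of $\log|s|$) go through.
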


\begin{proof}
Without loss of generality, we may assume that $u(x,y,z)=z-g(x,y)$, i.e. $\surf=\{z=g(x,y)\}$, where $g\colon\R^2\rightarrow\R$ is smooth, and that $C(\surf)=\{\textbf{0}\}$. This implies
\begin{equation}
g(0,0)=\partial_xg(0,0)=\partial_yg(0,0)=0.
\end{equation}
Notice that the local integrability of $W$ is preserved by the action of isometries of $\hei$. We can exploit this fact to reduce $g$ to a normal form. Recall first that Heisenberg isometries (preserving the origin and the orientation of the z-axis) are given by the standard action of $\mathrm{SO}(2)$ on the $xy$-plane. Consider then the isometry 
\begin{equation}
\begin{pmatrix}
x\\y\\z
\end{pmatrix}\mapsto
\begin{pmatrix}
 \tilde x\\ \tilde y\\ \tilde z
\end{pmatrix}=
\begin{pNiceArray}{cc|c}[margin]
\Block{2-2}{R} & &  \\
& \hspace*{0.5cm} & \\
\hline
& &1
\end{pNiceArray}
\begin{pmatrix}
x\\y\\z
\end{pmatrix},
\end{equation}
where $R\in\mathrm{SO}(2)$ is defined in \eqref{eqn:change_frame}, in this way the frame $\{N,\Nperp\}$ is sent to the standard frame $\{X,Y\}$.  In particular, since $N_{\textbf{0}}\in \ker(\hess(u)|_{\textbf{0}})$, $N$ is given by
\begin{equation}
N=aX+bY=\frac{-YXu(p)X+XXu(p)Y}{\sqrt{YXu(p)^2+XXu(p)^2}}=\frac{\left(g_{11}+\frac{1}{2}\right)X-g_{20}Y}{\sqrt{\left(g_{11}+\frac{1}{2}\right)^2+g_{20}^2}},
\end{equation}
where $g_{ij}=\partial_x^i\partial_y^jg(0,0)$. Hence, the change of coordinates is given by
\begin{equation}
\begin{pmatrix}
 x\\y\\z
\end{pmatrix}=\frac{1}{W_0}
\begin{pmatrix}
 g_{11}+\frac{1}{2}&g_{20} &  \\
-g_{20}& g_{11}+\frac{1}{2} & \\
& &1
\end{pmatrix}
\begin{pmatrix}
 \tilde x\\ \tilde y\\ \tilde z
\end{pmatrix}=\frac{1}{W_0}\begin{pmatrix}
 \left(g_{11}+\frac{1}{2}\right)\tilde x+g_{20}\tilde y   \\
-g_{20}\tilde x +\left(g_{11}+\frac{1}{2}\right)\tilde y \\
\tilde z
\end{pmatrix},
\end{equation}
where $W_0=\sqrt{\left(g_{11}+\frac{1}{2}\right)^2+g_{20}^2}$.
Therefore, expanding $u$, we have
\begin{align}
u(x,y,z)&=z-\frac{g_{20}}{2}x^2-\frac{g_{02}}{2}y^2-g_{11}xy+O(r^3)\\
				&=\tilde z-\frac{g_{20}}{2W_0^2}\left(\left(g_{11}+\frac{1}{2}\right)\tilde x+g_{20}\tilde y\right)^2-\frac{g_{02}}{2W_0^2}\left(-g_{20}\tilde x +\left(g_{11}+\frac{1}{2}\right)\tilde y\right)^2\\
&\quad -\frac{g_{11}}{W_0^2}\left(\left(g_{11}+\frac{1}{2}\right)\tilde x+g_{20}\tilde y\right)\left(-g_{20}\tilde x +\left(g_{11}+\frac{1}{2}\right)\tilde y\right)+O(r^3),
\end{align}
where $r=\sqrt{x^2+y^2}$, and the coefficients of the second-order terms become
\begin{align}
{\tilde {x}}^2&\rightsquigarrow \frac{1}{W_0^2}\left(-\frac{g_{20}}{2}\left(g_{11}+\frac{1}{2}\right)^2+g_{11}\left(g_{11}+\frac{1}{2}\right)g_{20}-\frac{g_{02}}{2}g_{20}^2\right)=0,\\
\tilde x \tilde y&\rightsquigarrow \frac{1}{W_0^2}\left(-g_{20}\left(g_{11}+\frac{1}{2}\right)g_{20}-g_{11}\left(\left(g_{11}+\frac{1}{2}\right)^2-g_{20}^2\right)+g_{02}g_{20}\left(g_{11}+\frac{1}{2}\right)\right)=-\frac{1}{2},
\end{align}
having used the fact that the characteristic point is degenerate, which gives the condition $g_{20}g_{02}=g_{11}^2-\frac{1}{4}$. Hence, the function $g$ simplifies to
\begin{equation}
\label{eqn:norm_form}
g(\tilde x,\tilde y)=\frac{1}{2}\tilde x\tilde y+\frac{\alpha}{2}\tilde y^2+h(\tilde x,\tilde y),
\end{equation}
where $\alpha\in\R$ and $h\in C^\infty(\R^2)$ with order $\geq 3$. Notice that the specific value of $\alpha$ won't play any role in the integrability of $\mean$.

We can then assume that $u(x,y,z)=z-g(x,y)$ where $g$ has the normal form \eqref{eqn:norm_form}, and that $N=X$, $\Nperp=Y$. For such a function $u$, the norm of the horizontal gradient is 
\begin{equation}
W^2=(\alpha y+h_y)^2+(y+h_x)^2,
\end{equation}
so, since in these coordinates $\riemarea=f(x,y)dxdy$, where $f$ is a strictly positive and smooth function, we focus on 
\begin{equation}
\label{eqn:int_mean1}
\int_V\frac{1}{W}f(x,y)dxdy,
\end{equation}
where $V$ is a neighborhood of $(0,0)$. We may set $f\equiv 1$, since its explicit expression plays no role in the integrability. From Lemma \ref{lem:smooth_curve}, $\mathcal{C}=\{Nu=0\}\cap\surf$ is a smooth curve, whose expression in coordinates is $\{y+h_x=0\}\cap\surf$. Thus, we introduce the following smooth change of variables around the origin, rectifying $\mathcal{C}$ 
\begin{equation}
\label{eqn:change_var}
\varphi\colon
\begin{pmatrix}
x\\y
\end{pmatrix}
\mapsto
\begin{pmatrix}
x\\t
\end{pmatrix}
\quad\text{such that}\quad
\begin{cases}
x=x\\
t=y+h_x(x,y)
\end{cases}
\end{equation} 
and the integral \eqref{eqn:int_mean1} becomes
\begin{equation}
\label{eqn:int_mean2}
\int_{V'}\frac{1}{\left((\alpha t+(h_y-\alpha h_x))^2+t^2\right)^{1/2}|1+2h_{xy}|}dtdx,
\end{equation}
where the integrand is evaluated in $\varphi^{-1}(x,t)$. Here $V'=\varphi(V)$. We expand in Taylor series the function $h_y-\alpha h_x$, with respect to the $t$-variable at the point $(x,0)$, obtaining
\begin{equation}
\label{eqn:xi_defn}
h_y(\varphi^{-1}(x,t))-\alpha  h_x(\varphi^{-1}(x,t))=\xi(x)+tR(x,t),
\end{equation}
where $\xi,R$ are smooth functions of order $\geq 2$ and $\geq 1$ respectively, since $h$ was of order at least 3 and the notion of order in $t$ is preserved by $\varphi$. But now, parametrizing the critical curve by $x\mapsto \gamma(x) = (x,y(x),g(x,y(x)))$ where $y(x) +h_x(x,y(x))=0$, we have that $\varphi^{-1}(x,0)=(x,y(x))$ and
\begin{equation}
\xi(x)=-\alpha h_x(\varphi^{-1}(x,0))+h_y(\varphi^{-1}(x,0))=-\alpha h_x(x,y(x))+h_y(x,y(x))=Tu(\gamma(x)).
\end{equation}
Thus, by assumption of mildly degenerate characteristic point, $\xi$ has a zero of finite order at $x=0$. So, we may write
\begin{equation}
\xi(x)=c_0x^k(1+r(x)),
\end{equation}
where $k$ is an integer $\geq 2$, and $r$ is a smooth function of order $\geq 1$. Thus, we introduce the following weighted polar coordinates in the plane
\begin{equation}
\label{eqn:polar}
\psi\colon
\begin{pmatrix}
x\\t
\end{pmatrix}
\mapsto
\begin{pmatrix}
\rho\\ \theta
\end{pmatrix}
\quad\text{such that}\quad
\begin{cases}
c_0x^k=\rho\cos(\theta)\\
(\alpha^2+1)^{1/2}t=\rho\sin(\theta)
\end{cases}
\end{equation} 
whose Jacobian is $\frac{1}{(\alpha^2+1)^{1/2}kc_0}\rho^{1/k}|\cos(\theta)|^{1/k-1}$. In these new coordinates, the function $W$ becomes
\begin{equation}
\begin{split}
W^2&=(\alpha t+\xi(x)+tR(x,t))^2+t^2=\rho^2\left(1+\frac{\alpha\sin(2\theta)}{(1+\alpha^2)^{1/2}}+R_{\mathrm{pol}}(\rho,\theta)\right),
\end{split}
\end{equation}
where $R_{\mathrm{pol}}(\rho,\theta)$ is a remainder term vanishing at $\rho=0$. Therefore, the integral  \eqref{eqn:int_mean2} is controlled by
\begin{equation}
\int_{V''}\frac{\left|\rho\cos(\theta)\right|^{1/k-1}}{\sqrt{1+\frac{\alpha\sin(2\theta)}{(1+\alpha^2)^{1/2}}+R_{\mathrm{pol}}(\rho,\theta)}}d\rho d\theta,
\end{equation}
where $V''=\psi(\varphi(V))$. But now this integral is finite, since 
\begin{equation}
1+\frac{\alpha\sin (2\theta)}{(\alpha^2+1)^{1/2}}>1-\frac{|\alpha|}{(\alpha^2+1)^{1/2}}>0
\end{equation}
and thus the denominator, up to restricting $V''$, is never-vanishing.
\end{proof}

\begin{rmk}
\label{rmk:sharpness}
The mild degeneration assumption is sharp for the local integrability of $W^{-1}$. Consider the example taken from \cite[Prop. 3.4]{DGN-Integrability} where $\surf=\{z=g(x,y)\}$, with
\begin{equation}
g(x,y)=\frac{1}{2}xy+\frac{1}{2}y^2+\int_0^xe^{-\tau^{-2}}d\tau.
\end{equation}
Here $N=X$ and $\Nperp=Y$, being $g$ in the normal form \eqref{eqn:norm_form}. Then, the critical curve of $\textbf{0}$ is $\mathcal{C}=\{y+e^{-x^{-2}}=0\}\cap\surf$, which can be parametrized by
\begin{equation}
\gamma(x)=\left(x,-e^{-x^{-2}},g\left(x,e^{-x^{-2}}\right)\right).
\end{equation}
Thus, $Tu(\gamma(x))=-e^{-x^{-2}}$, which has infinite order at $x=0$. Therefore, $\textbf{0}$ is not a mildly degenerate characteristic point and one can check that $W^{-1}$ is not locally integrable.

Notice, however, that in the previous example, $\mean$ is locally integrable. Thus, in general, to prove the integrability of $\mean$, one should take into account also its numerator, which vanishes at characteristic points.
\end{rmk}

\begin{thm}\label{thm:realanal}
Let $\surf\subset \hei$ be a real-analytic embedded surface, let $p$ be an isolated characteristic point and let $u\in C^\omega$ be a local defining function for $\surf$ in a neighborhood of $p$. Denote with $W$ the norm of the horizontal gradient of $u$. Then
\begin{equation}
\frac{1}{W}\in L^1_{\mathrm{loc}}(\surf,\sigma_\mathrm{R}),
\end{equation}
where $\sigma_\mathrm{R}$ denotes the Riemannian induced measure on $\surf$. In particular
\begin{equation}
\mean\in L^1_{\mathrm{loc}}(\surf,\sigma_\mathrm{R}). 
\end{equation}
\end{thm}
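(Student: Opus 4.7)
The proof reduces to the two integrability results already established. Since characteristic points are isolated, local integrability of $W^{-1}$ around $C(\surf)$ is a question to be decided one point at a time. Around a non-degenerate $p \in C(\surf)$, Theorem \ref{thm:int_nondeg} applies directly (real-analyticity is not needed there). Around a degenerate $p \in C(\surf)$, it suffices, thanks to Theorem \ref{thm:int_deg}, to verify that $p$ is mildly degenerate in the sense of Definition \ref{d:mildly}. The entire content of the statement is therefore the claim: \emph{in the real-analytic category, every isolated degenerate characteristic point is automatically mildly degenerate}.

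To prove this claim, the plan is first to upgrade the regularity of the critical curve $\mathcal{C}$. In the proof of Lemma \ref{lem:smooth_curve}, $\mathcal{C}$ was cut out near $p$ by the map $\phi = (u, Nu)$, whose differential was shown to have maximal rank at $p$ via the bracket-generating identity $[N,\Nperp]u(p) \neq 0$. Since $u$ is real-analytic and $N, \Nperp$ are left-invariant (hence real-analytic) vector fields on $\hei$, the map $\phi$ is real-analytic, and the real-analytic implicit function theorem yields a real-analytic regular parametrization $\gamma\colon(-\varepsilon, \varepsilon) \to \mathcal{C}$ with $\gamma(0) = p$. Consequently, the function
\begin{equation}
\xi(s) := Tu(\gamma(s)) = \Nperp u(\gamma(s))
\end{equation}
is real-analytic on $(-\varepsilon, \varepsilon)$.

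The crux is then to rule out that $\xi$ vanishes to infinite order at $s=0$. Suppose it did; by the identity principle for real-analytic functions of one variable, $\xi$ would be identically zero on some interval $(-\delta, \delta)$. Combined with the fact that $Nu(\gamma(s)) = 0$ for all $s$ (which holds by the very definition of $\mathcal{C}$), and using that $\{N, \Nperp\}$ is a frame for the contact distribution $\D$, this would give
\begin{equation}
\D_{\gamma(s)} \subset \ker d_{\gamma(s)}u = T_{\gamma(s)}\surf, \qquad \forall\, s \in (-\delta, \delta),
\end{equation}
so every point of the image of $\gamma$ would belong to $C(\surf)$. Since $\gamma$ is regular, this image contains a sequence of characteristic points converging to $p$ but distinct from it, contradicting the isolation of $p$. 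Hence $\xi$ has a zero of finite order at $s=0$, which is precisely the mild degeneracy condition, and Theorem \ref{thm:int_deg} concludes the proof. The local integrability of $\mean$ then follows from the estimate \eqref{eqn:mean_est}.

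The only substantive obstacle is the step from smoothness to real-analyticity of the critical curve parametrization; once this is secured, the dichotomy \emph{finite order or identically zero} for one-variable analytic functions does all the work, and the isolation hypothesis provides the needed contradiction in the latter alternative.
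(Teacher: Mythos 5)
Your proposal is correct and follows essentially the same route as the paper: reduce to showing that an isolated degenerate characteristic point of a real-analytic surface is automatically mildly degenerate, observe that $\xi(s)=Tu(\gamma(s))$ is real-analytic, and use the dichotomy that a one-variable analytic function either vanishes identically (which would produce a curve of characteristic points through $p$, contradicting isolation) or has a finite-order zero. The paper carries this out in the explicit normal-form coordinates $(x,t)$ from the proof of Theorem \ref{thm:int_deg}, whereas you phrase it intrinsically via the real-analytic implicit function theorem applied to $\phi=(u,Nu)$; the content is the same.
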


\begin{proof}
If $C(\surf)$ consists of non-degenerate characteristic points, the result follows from Theorem \ref{thm:int_nondeg}. If $p\in C(\surf)$ is degenerate, we show that $p$ is actually mildly degenerate. We can assume that $\surf=\{z-g(x,y)=0\}$, where $g\in C^\omega(\R^2)$ has the normal form \eqref{eqn:norm_form}, and $\textbf{0}\in C(\surf)$ is degenerate. In this case, in coordinates $(x,t)=\varphi(x,y)$ defined in \eqref{eqn:change_var}, the critical curve of $\textbf{0}$ is $\mathcal{C}=\{t=0\}$ and $p\in C(\surf)$ if and only if
\begin{equation}
\begin{cases}
\xi(x)=0,\\
t=0.
\end{cases}
\end{equation}
Since $\textbf{0}$ is an isolated characteristic point, $\xi$ is not identically zero. Thus, since $\xi$ is real-analytic it has finite order at $x=0$.
\end{proof}

\bibliographystyle{alphaabbr}
\bibliography{biblio-mean}

\end{document}